\newcounter{Angle}
\newtheorem{theorem}{Theorem}[section]
\newtheorem{corollary}[theorem]{Corollary}
\newtheorem{lemma}[theorem]{Lemma}
\newtheorem{proposition}[theorem]{Proposition}
\newtheorem{claim}[theorem]{Claim}
\newtheorem{fact}[theorem]{Fact}
\newtheorem{conjecture}[theorem]{Conjecture}
\tikzstyle{vertex}=[circle, draw, inner sep=0pt, minimum size=6pt]
\theoremstyle{definition}
\newtheorem{definition}[theorem]{Definition}
\begin{document}  
\author[1]{Jessica De Silva}
\author[2]{Xiang Si}
\author[2]{Michael Tait}
\author[3]{Yunus Tun\c{c}bilek}
\author[4]{Ruifan Yang}
\author[5]{Michael Young}
\affil[1]{University of Nebraska-Lincoln}
\affil[2]{Carnegie Mellon University}
\affil[3]{Yale University}
\affil[4]{Boston College}
\affil[5]{Iowa State University}
\title{Anti-Ramsey Multiplicities}

\maketitle

\begin{abstract}
The Ramsey multiplicity constant of a graph $H$ is the minimum proportion of copies of $H$ in the complete graph which are monochromatic under an edge-coloring of $K_n$ as $n$ goes to infinity. Graphs for which this minimum is asymptotically achieved by taking a random coloring are called {\em common}, and common graphs have been studied extensively, leading to the Burr-Rosta conjecture and Sidorenko's conjecture. Erd\H{o}s and S\'os asked what the maximum number of rainbow triangles is in a $3$-coloring of the edge set of $K_n$, a rainbow version of the Ramsey multiplicity question. A graph $H$ is called $r$-anti-common if the maximum proportion of rainbow copies of $H$ in any $r$-coloring of $E(K_n)$ is asymptotically achieved by taking a random coloring. In this paper, we investigate anti-Ramsey multiplicity for several families of graphs. We determine classes of graphs which are either anti-common or not. Some of these classes follow the same behavior as the monochromatic case, but some of them do not. In particular the rainbow equivalent of Sidorenko's conjecture, that all bipartite graphs are anti-common, is false.
\end{abstract}
\newcommand{\vertex}{\node[vertex]}

\section{Introduction}

All graphs that we consider will be finite and simple. If $H$ is a subgraph of $G$, we write $H \subseteq G$ and we say $G$ contains a \emph{copy} of $H$. An \emph{r-edge-coloring} of a graph $G$ is a function with domain $E(G)$ and codomain a set of $r$ colors, $\{1,\ldots,r\}$. Given an edge coloring $c$ of $G$, a subgraph $H$ of $G$ is said to be \emph{monochromatic} if for every $e,f\in E(H)$ $c(e)=c(f)$. 
That is, a subgraph is monochromatic if all its edges are the same color (e.g., Figure \ref{monochromatic}). 

\begin{figure}[b]
\centering
\begin{tikzpicture}[scale=0.7, every node/.style={scale=0.7}]
\node[circle,fill=black!20,draw] (A) at (0,-3) {1};
\node[circle,fill=black!20,draw] (B) at (3,-3) {4};
\node[circle,fill=black!20,draw] (C) at (0,0) {2};
\node[circle,fill=black!20,draw] (D) at (3,0) {3};

\draw (A) edge[blue,thick] (B);
\draw (C) edge[red,thick] (D);
\draw (A) edge[red,thick] (C);
 \draw (B) edge[blue,thick] (D);
 \draw (A) edge[red,thick] (D);
\end{tikzpicture}
\caption{The vertices $\{1,2,3\}$ form a monochromatic $K_3$.}\label{monochromatic}
\end{figure}
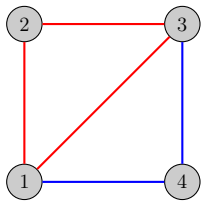

Given a complete graph $K_n$ and a subgraph $H$ of $K_n$, it is an interesting question to determine how many monochromatic copies of $H$ are we guaranteed to find in any $r$-edge-coloring of $K_n$. The maximum number we can guarantee is known as the Ramsey multiplicity. In particular, the \emph{Ramsey multiplicity} $M_r(H;n)$ is the minimum over all $r$-edge-colorings of $K_n$ of the number of monochromatic copies of $H$. We consider the Ramsey multiplicity of a graph $H$ with $m$ vertices relative to the number of copies of $H$ in $K_n$ via the ratio
\[C_r(H;n)=\frac{M_r(H;n)}{\binom{n}{m}\frac{m!}{|\textrm{Aut}(H)|}}.\]
The denominator is the number of copies of $H$ in $K_n$ where $\textrm{Aut}(H)$ is the set of automorphisms of $H$. Intuitively, this ratio can be thought of as the probability a randomly chosen copy of $H$ in $K_n$ is monochromatic. We can obtain an immediate bound on $C_r(H;n)$ by coloring each edge of $K_n$ color $i$ independently with probability $\frac{1}{r}$. Under this random coloring, any copy of $H$ in $K_n$ is monochromatic with probability $r^{1-e(H)}$. This gives an upper bound on $C_r(H;n)$ of $r^{1-e(H)}$. In \cite{jagger}, Jagger, \v{S}\v{t}ov\'i\v{c}ek, and Thomason show that $C_r(H;n)$ is nondecreasing in $n$ and so since it is also bounded the limit
\[C_r(H)=\lim_{n\to\infty} C_r(H;n),\]
exists and is known as the \emph{Ramsey multiplicity constant} of $H$ \cite{fox}.

The earliest result in this area was by Goodman in 1959 who proved $C_2(K_3)= \frac{1}{4}$ \cite{goodman}. In 1962, Erd\H{o}s conjectured that $C_2(K_n) = 2^{1-\binom{n}{2}}$ for all cliques \cite{erdos}. Burr and Rosta later conjectured that for all graphs $H$,  $C_2(H) = 2^{1-e(H)}$ \cite{burr}. We call a graph \textit{common} if it satisfies the Burr-Rosta conjecture. Sidorenko disproved the Burr-Rosta conjecture by showing that a triangle with a pedant edge is not common \cite{sidorenko}. Thomason disproved the initial conjecture of Erd\H{o}s by showing that for $p\geq 4$, $K_p$ is not common \cite{thomason}. Sidorenko conjectured instead that all bipartite graphs are common \cite{sidorenkobipartite}, this conjecture is well-known and is referred to as Sidorenko's conjecture. Much work has been done on the both the Burr-Rosta conjecture (see, e.g., \cite{jagger, goodman, burr, sidorenko, sidorenkofunc, jacobson}) and on Sidorenko's conjecture (c.f. \cite{blakleyroy, conlonfoxsudakov, hatami, kimleelee}). If we instead consider $r>2$, we call $H$ is called $r$\emph{-common} if $C_r(H) = r^{1-e(H)}$. Jagger et. al. showed that if a graph $G$ is not $r$-common, then it is not $(r+1)$-common \cite{jagger}. In 2011, Cummings and Young proved that no graph containing $K_3$ is 3-common \cite{young}. 
There are many open questions which remain for $r>2$.

We will consider a similar parameter to the Ramsey multiplicity constant by searching for rainbow subgraphs as opposed to monochromatic subgraphs. Given an edge coloring $c$ of $G$, a subgraph $H$ of $G$ is said to be \emph{rainbow} if for every pair of distinct edges $e,f\in E(H)$, $c(e)\neq c(f)$. 
In Figure \ref{monochromatic}, the edges $13$ and $34$ form a rainbow copy of $P_2$. Under this umbrella, a minimization problem is uninteresting since it is possible to color all edges the same color and hence contain no rainbow copy of $H$ (assuming $e(H)>1$). Instead, we ask what is the maximum number of rainbow copies of $H$ we can find amongst all edge colorings of $K_n$. Let $rb_r(H;n)$ be the maximum over all $r$-edge-colorings of $K_n$ of the number of rainbow copies of $H$ and call this the \emph{anti-Ramsey multiplicity} of $H$. In this paper, we will build the theory of the anti-Ramsey multiplicity constant and prove/disprove $r$-anti-commonality of various classes of graphs.

\section{The anti-Ramsey multiplicity constant}

Before we define the anti-Ramsey multiplicity constant, we will first prove that given a graph $H$, the maximum probability a copy of $H$ is rainbow under a coloring of $K_n$ is bounded and monotone as a function of $n$. As in the Ramsey case, we will consider the anti-Ramsey multiplicity of a graph $H$ with $m$ vertices relative to the number of copies of $H$ in $K_n$ via the ratio
\[rbC_r(H;n)=\frac{rb_r(H;n)}{\binom{n}{m}\frac{m!}{|\textrm{Aut}(H)|}}.\]
For the remainder of this section, fix a graph $H=(V,E)$ with $|V|=m$ and $e(H)=e$.

\begin{proposition}\label{bounded}
 \[rbC_r(H;n) \geq \frac{\binom{r}{e} e!}{r^e}\]
\end{proposition}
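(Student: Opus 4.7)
The plan is to prove this lower bound by exhibiting a specific distribution over $r$-edge-colorings of $K_n$ and showing that the expected proportion of rainbow copies of $H$ under this distribution already meets the claimed bound; by averaging, some coloring in the support must achieve at least this many rainbow copies, which suffices.

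First I would consider the uniform random $r$-edge-coloring $c$ of $K_n$, where each edge is independently assigned one of the $r$ colors with probability $1/r$. For any fixed copy $H' \subseteq K_n$ of $H$, I would compute the probability that $H'$ is rainbow under $c$. Since $H'$ has $e$ edges whose colors are independent and uniform, the number of rainbow assignments is the number of injections from $E(H')$ into the $r$-color palette, namely $r(r-1)\cdots(r-e+1) = \binom{r}{e} e!$. Dividing by the total $r^e$ colorings of $E(H')$, the probability that $H'$ is rainbow is exactly
\[
\Pr[H' \text{ rainbow}] = \frac{\binom{r}{e} e!}{r^e}.
\]
(If $e > r$ this quantity is $0$ and the inequality is trivial, so we may assume $e \le r$.)

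Next, by linearity of expectation summed over all copies of $H$ in $K_n$ — of which there are $\binom{n}{m} \frac{m!}{|\mathrm{Aut}(H)|}$ — the expected total number of rainbow copies of $H$ under $c$ equals
\[
\mathbb{E}[\#\text{rainbow copies of } H] = \binom{n}{m}\frac{m!}{|\mathrm{Aut}(H)|}\cdot \frac{\binom{r}{e} e!}{r^e}.
\]
Therefore some realization of $c$ attains at least this value, giving $rb_r(H;n) \geq \binom{n}{m}\frac{m!}{|\mathrm{Aut}(H)|} \cdot \frac{\binom{r}{e} e!}{r^e}$. Dividing both sides by $\binom{n}{m}\frac{m!}{|\mathrm{Aut}(H)|}$ yields the desired inequality on $rbC_r(H;n)$.

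There is no genuine obstacle here: the argument is a one-line probabilistic existence proof. The only subtlety worth flagging is that the probability of a single copy being rainbow does not depend on which copy is chosen (the edges are distinct and their colors independent), so linearity applies cleanly without having to account for overlap between copies; this is what makes the random-coloring bound tight at the level of expectation.
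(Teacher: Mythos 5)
Your proof is correct and follows essentially the same approach as the paper: a uniform random $r$-edge-coloring, the computation that a fixed copy of $H$ is rainbow with probability $\binom{r}{e}e!/r^e$, and an averaging argument to extract a coloring achieving at least the expected count. The only difference is cosmetic — you make the linearity-of-expectation step over all copies explicit, while the paper phrases it directly in terms of the expected proportion.
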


\begin{proof}
We will color the edges of $K_n$ uniformly and independently at random from the set $\{1,\ldots,r\}$. In particular, each edge is colored color $i$ with probability $\frac{1}{r}$ for $i=1,\ldots,r$. The number of possible rainbow edge assignments of a graph with $e$ edges is $\binom{r}{e}e!$ and a given edge assignment occurs with probability $\left(\frac{1}{r}\right)^e$. Thus the expected probability that a randomly selected copy of $H$ in $K_n$ is rainbow is given by $\frac{\binom{r}{e} e!}{r^e}$. Therefore there exists a coloring such that this probability is at least $\frac{\binom{r}{e}e!}{r^e}$ and since $rbC_r(C;n)$ is the maximum over all such probabilities, the inequality follows.
\end{proof}

\begin{proposition}\label{monotone}
\[rbC_r(H;n)\leq rbC_r(H;n-1)\]
\end{proposition}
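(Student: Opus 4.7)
The plan is to use a standard double-counting / averaging argument that passes from an optimal coloring of $K_n$ to an induced coloring of some $K_{n-1}$.

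First, fix an $r$-edge-coloring $c$ of $K_n$ that achieves the maximum, so that $K_n$ contains exactly $rb_r(H;n)$ rainbow copies of $H$ under $c$. For each vertex $v \in V(K_n)$, let $K_n - v$ denote the induced subgraph on the remaining $n-1$ vertices with the coloring inherited from $c$; this is an $r$-edge-coloring of a $K_{n-1}$, so it contains at most $rb_r(H;n-1)$ rainbow copies of $H$.

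The key combinatorial identity is obtained by summing over $v$. A fixed rainbow copy of $H$ in $K_n$ uses exactly $m$ vertices, so it survives in $K_n - v$ precisely when $v$ is one of the $n-m$ vertices outside $V(H)$. Hence
\[
\sum_{v \in V(K_n)} \bigl(\text{number of rainbow copies of $H$ in $K_n - v$}\bigr) \;=\; (n-m)\, rb_r(H;n).
\]
On the other hand, each term on the left is at most $rb_r(H;n-1)$, so the left side is at most $n \cdot rb_r(H;n-1)$. This yields the key inequality
\[
(n-m)\, rb_r(H;n) \;\le\; n\, rb_r(H;n-1).
\]

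Finally, I would convert this into a statement about the normalized ratio using $\binom{n-1}{m} = \binom{n}{m} \cdot \tfrac{n-m}{n}$. Dividing both sides of the inequality by $\binom{n}{m}\,\tfrac{m!}{|\mathrm{Aut}(H)|}$ and rewriting the right-hand side produces
\[
rbC_r(H;n) \;=\; \frac{rb_r(H;n)}{\binom{n}{m}\tfrac{m!}{|\mathrm{Aut}(H)|}} \;\le\; \frac{n}{n-m}\cdot\frac{rb_r(H;n-1)}{\binom{n}{m}\tfrac{m!}{|\mathrm{Aut}(H)|}} \;=\; \frac{rb_r(H;n-1)}{\binom{n-1}{m}\tfrac{m!}{|\mathrm{Aut}(H)|}} \;=\; rbC_r(H;n-1),
\]
which is the desired monotonicity.

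There is no real obstacle here; the only thing to be careful about is the multiplicity $(n-m)$ in the double count (which requires $n \ge m$, a trivial assumption since otherwise $K_n$ has no copy of $H$ at all and the inequality holds vacuously). Note that this argument is the exact rainbow analogue of the proof of monotonicity for the Ramsey multiplicity $C_r(H;n)$ in \cite{jagger}: the combinatorial content is identical because both properties (being monochromatic and being rainbow) are preserved under restriction of the coloring.
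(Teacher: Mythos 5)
Your proof is correct and is essentially identical to the paper's: the paper also reduces the claim to $(n-m)\,rb_r(H;n) \le n\,rb_r(H;n-1)$ and establishes it by double-counting pairs consisting of a $K_{n-1} \subseteq K_n$ and a rainbow copy of $H$ inside it, which is exactly your sum over deleted vertices $v$. No gaps.
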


\begin{proof}
The inequality is clear if $rbC_r(H;n)=0$ and so we suppose otherwise. Equivalently, we must show
\begin{align*}
\frac{rb_r(H;n) }{\binom{n}{m}} &\leq \frac{rb_r(H;n-1) }{\binom{n-1}{m}} \quad &\Longleftrightarrow\\
\frac{rb_r(H;n) }{\frac{n!}{m! (n-m)!}} &\leq \frac{rb_r(H;n-1) }{\frac{(n-1)!}{m! (n-m-1)!}}\quad &\Longleftrightarrow\\
\frac{rb_r(H;n) }{\frac{n}{n-m}} &\leq rb_r(H;n-1) \quad &\Longleftrightarrow\\
(n-m)rb_r(H;n)&\leq rb_r(H;n-1)n
\end{align*}
Let $c_r$ be an $r$-edge-coloring of $K_n$ such that the number of rainbow copies of $H$ in $K_n$ under coloring $c_r$ is exactly $rb_r(H;n)$. We will count the order of the set
\[H_n := \{(G,H)\,:\,G \textrm{ is a } K_{n-1} \subseteq K_n\textrm{ and }H \subseteq G \textrm{ is rainbow}\}\]
in two ways. First, note that each rainbow copy of $H$ is contained in $n-m$ different $K_{n-1}$ by removing any vertex in $K_n$ that is not a vertex of $H$. Since there are exactly $rb_r(H;n)$ copies of $H$ in $K_n$, $|H_n| = (n-m)rb_r(H;n)$. Now each $K_{n-1}$ in $K_n$ contains at most $rb_r(H;n-1)$ rainbow copies of $H$ and so $|G_n| \leq rb_r(G;n-1)n$. Therefore \[(n-m)rb_r(H;n)  = |H_n| \leq rb_r(H;n-1)n,\] which implies the result.
\end{proof}
We are now ready to define the anti-Ramsey multiplicity constant.

\begin{corollary}
The anti-Ramsey multiplicity constant, given by
\[rbC_r(H)=\lim_{n\to\infty}rbC_r(H;n),\]
exists and is finite.
\end{corollary}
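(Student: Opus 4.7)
The plan is to deduce the corollary as an immediate consequence of the two propositions just proved, combined with the monotone convergence theorem for sequences of real numbers. The key observation is that Proposition \ref{monotone} asserts that the sequence $\{rbC_r(H;n)\}_{n \geq m}$ is monotonically non-increasing in $n$, while Proposition \ref{bounded} asserts that every term of this sequence is at least $\binom{r}{e}e!/r^e$, which is a positive constant depending only on $H$ and $r$.

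First I would observe that since $rbC_r(H;n)$ is defined as the ratio of $rb_r(H;n)$ to the total number of copies of $H$ in $K_n$, it represents the maximum proportion of copies of $H$ in $K_n$ that can be simultaneously rainbow under a single $r$-edge-coloring. In particular, $rbC_r(H;n) \leq 1$ trivially, so the sequence is bounded above as well.

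Next I would invoke the standard result that every bounded monotone sequence of real numbers converges. Applying this to $\{rbC_r(H;n)\}_{n \geq m}$, which is non-increasing by Proposition \ref{monotone} and bounded below by $\binom{r}{e}e!/r^e > 0$ by Proposition \ref{bounded}, we conclude that
\[rbC_r(H) = \lim_{n \to \infty} rbC_r(H;n)\]
exists. Moreover, since the sequence is bounded both above (by $1$) and below (by a positive constant), the limit satisfies
\[\frac{\binom{r}{e}e!}{r^e} \leq rbC_r(H) \leq 1,\]
so in particular $rbC_r(H)$ is finite (and in fact positive).

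There is no real obstacle here; the corollary is essentially a packaging statement that combines Propositions \ref{bounded} and \ref{monotone} with a standard real-analysis fact. The only thing worth being careful about is the direction of monotonicity: Proposition \ref{monotone} gives $rbC_r(H;n) \leq rbC_r(H;n-1)$, so the sequence decreases (rather than increases as in the Ramsey multiplicity case in \cite{jagger}), but the lower bound from Proposition \ref{bounded} still forces convergence.
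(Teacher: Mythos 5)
Your proposal is correct and matches the paper's argument exactly: both cite Propositions \ref{bounded} and \ref{monotone} to get a bounded monotone sequence and then apply the Monotone Convergence Theorem. Your added remarks about the trivial upper bound of $1$ and the direction of monotonicity are harmless elaborations of the same proof.
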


\begin{proof}
By Propositions \ref{bounded} and \ref{monotone}, the sequence $\{rbC_r(H;n)\}_{n=m}^\infty$ is bounded and monotone. Hence by the Monotone Convergence Theorem, the limit exists and is finite.
\end{proof}
Note that the anti-Ramsey multiplicity constant has the same lower bound as that of Proposition \ref{bounded}, motivating the following definition.

\begin{definition}
For $r\geq m$, we say that $H$ is $r$-anti-common if
\[rbC_r(H)=\frac{\binom{r}{e}e!}{r^e}.\]
If $H$ is $r$-anti-common for all $r\geq m$, $H$ is called anti-common.
\end{definition}

\section{Anti-common graphs}

In this section we will prove anti-commonality for matchings and disjoint unions of stars. We will state but not prove the number of automorphisms for each graph in question and for more details regarding automorphisms of graphs see \cite{bona}. Suppose $f(n)$ and $g(n)$ are two real-valued functions. We say
\[f(n)=O(g(n))\]
if and only if there exist positive constants $C,N$ such that $|f(n)|\leq C|g(n)|$ for all $n>N$. We will sometimes abuse notation and use big-O notation in a string of inequalities. For example $f(n) \leq g(n) + O(n)$ means there exist $C,N$ such that $f(n) \leq g(n) + Cn$ for all $n\geq N$. 
\begin{lemma}\label{simpleineq}
If $H=(V,E)$ has order $m$ and size $e$ such that for sufficiently large $n$
\[rb_r(H;n)\leq \frac{n^m\binom{r}{e}e!}{|\textrm{Aut}(H)|r^e} + O(n^{m-1}),\]
then $H$ is $r$-anti-common.
\end{lemma}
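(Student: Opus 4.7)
The plan is to sandwich $rbC_r(H;n)$ between matching lower and upper bounds and then pass to the limit. The lower bound is already in hand: Proposition~\ref{bounded} gives $rbC_r(H;n) \geq \binom{r}{e}e!/r^e$ for every $n$, so taking $n \to \infty$ (which exists by the preceding corollary) yields
\[
rbC_r(H) \geq \frac{\binom{r}{e}e!}{r^e}.
\]
So the content of the lemma is really that the hypothesis supplies the matching upper bound.

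For the upper bound, I would divide the hypothesized inequality on $rb_r(H;n)$ through by $\binom{n}{m}\frac{m!}{|\textrm{Aut}(H)|}$, which is exactly the denominator appearing in the definition of $rbC_r(H;n)$. The key arithmetic observation is that this denominator is a polynomial in $n$ of degree $m$ with leading term $n^m/|\textrm{Aut}(H)|$; more precisely,
\[
\binom{n}{m}\frac{m!}{|\textrm{Aut}(H)|} \;=\; \frac{n(n-1)\cdots(n-m+1)}{|\textrm{Aut}(H)|} \;=\; \frac{n^m + O(n^{m-1})}{|\textrm{Aut}(H)|}.
\]
Plugging into the hypothesis gives
\[
rbC_r(H;n) \;\leq\; \frac{\dfrac{n^m \binom{r}{e}e!}{|\textrm{Aut}(H)|\,r^e} + O(n^{m-1})}{\dfrac{n^m + O(n^{m-1})}{|\textrm{Aut}(H)|}} \;=\; \frac{\binom{r}{e}e!}{r^e}\cdot\frac{n^m + O(n^{m-1})}{n^m + O(n^{m-1})},
\]
where the $O(n^{m-1})$ terms on the right are produced by multiplying out; the constants depend on $r,e,m,|\textrm{Aut}(H)|$ but not on $n$.

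Letting $n \to \infty$, the ratio on the right tends to $1$, so we obtain $rbC_r(H) \leq \binom{r}{e}e!/r^e$. Combined with the lower bound this yields equality, i.e.\ $H$ is $r$-anti-common. There is no real obstacle here: the lemma is a convenient reformulation of the definition that lets later proofs focus only on establishing the polynomial-in-$n$ upper bound on $rb_r(H;n)$ with the correct leading coefficient, without repeatedly carrying out the same limit computation. The only thing to be careful about is to make the big-$O$ constants depend only on $H$ and $r$ (not on $n$) when invoking the hypothesis, so that the limit step is legitimate.
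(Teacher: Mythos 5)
Your argument is correct and is essentially the paper's own proof: the paper likewise divides the hypothesized bound on $rb_r(H;n)$ by $\binom{n}{m}\frac{m!}{|\textrm{Aut}(H)|}$, notes this denominator equals $\frac{n(n-1)\cdots(n-m+1)}{|\textrm{Aut}(H)|} = \frac{n^m+O(n^{m-1})}{|\textrm{Aut}(H)|}$, and lets $n\to\infty$ to get the matching upper bound, with the lower bound coming from Proposition~\ref{bounded}. Your added remark about keeping the big-$O$ constants independent of $n$ is the right thing to be careful about, but otherwise there is no substantive difference.
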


\begin{proof}
Assume that for $n$ large enough we have $rb_r(H;n)\leq \frac{n^m\binom{r}{e}e!}{|\textrm{Aut}(H)|r^e} + O(n^{m-1})$. Then
\begin{align*}
\lim_{n\to\infty}\frac{rb_r(H;n)}{\binom{n}{m}\frac{m!}{|\textrm{Aut}(H)|}}&\leq\lim_{n\to\infty}\frac{\frac{n^m\binom{r}{e}e!}{|\textrm{Aut}(H)|r^e}+O(n^{m-1})}{\binom{n}{m}\frac{m!}{|\textrm{Aut}(H)|}}\\
&=\frac{\binom{r}{e}e!}{r^e}\lim_{n\to\infty}\frac{n^m + O(n^{m-1})}{\binom{n}{m}m!}\\
&=\frac{\binom{r}{e}e!}{r^e}\lim_{n\to\infty}\frac{n^m + O(n^{m-1})}{n(n-1)\cdots(n-m+1)}\\
&=\frac{\binom{r}{e}e!}{r^e}.
\end{align*}
\end{proof}
We will also use the following 
inequality, often referred to as Maclaurin's inequality.

\begin{fact}\label{macineq}
Given positive integers $r\leq n$ and positive real numbers $x_1,\ldots,x_n$,
\[\sum_{\{i_1,i_2,\ldots,i_r\}\subseteq [n]}x_{i_1}x_{i_2}\cdots x_{i_r}\leq \binom{n}{r}\left(\frac{\sum_{i=1}^n x_i}{n}\right)^r\]
\end{fact}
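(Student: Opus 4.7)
The plan is to recognize the stated inequality as the classical Maclaurin inequality on elementary symmetric polynomials and to prove it via a smoothing (symmetrization) argument. Writing $e_r(x_1,\ldots,x_n) := \sum_{\{i_1,\ldots,i_r\}\subseteq[n]} x_{i_1}\cdots x_{i_r}$ for the $r$-th elementary symmetric polynomial, the inequality is equivalent to
\[
\frac{e_r(x_1,\ldots,x_n)}{\binom{n}{r}} \leq \left(\frac{x_1+\cdots+x_n}{n}\right)^{r},
\]
and I would prove it by showing that, when the sum $S=\sum x_i$ is held fixed, $e_r$ is maximized at the uniform configuration $x_i = S/n$, where equality holds by direct substitution.

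The key step is a smoothing lemma. Fix any two indices $i\neq j$ and view $e_r$ as a function of $(x_i,x_j)$ with the other coordinates frozen. Since $e_r$ is linear in each variable separately, it decomposes as
\[
e_r = A\cdot x_i x_j + B\cdot (x_i+x_j) + C,
\]
where $A$, $B$, $C$ are the elementary symmetric polynomials $e_{r-2}$, $e_{r-1}$, $e_r$ in the frozen $n-2$ variables, all nonnegative. Under the constraint $x_i+x_j=s$, the product $x_i x_j$ is maximized at $x_i=x_j=s/2$, so replacing any unequal pair by their common average cannot decrease $e_r$, and strictly increases it whenever $A>0$.

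The rest is a compactness argument: $e_r$ is continuous on the compact simplex $\{x\in\R_{\geq 0}^n : \sum x_i = S\}$, so it attains a maximum. If the maximizer were not the uniform point, some unequal pair $(x_i,x_j)$ would exist, and provided at least $r-2$ of the remaining coordinates are positive one has $A>0$, so smoothing contradicts maximality. The only remaining case is when fewer than $r$ of the coordinates are positive, but then $e_r=0$ and the inequality holds trivially. Hence the maximum is attained at $x_i=S/n$ for every $i$, giving $e_r=\binom{n}{r}(S/n)^r$, as required.

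The main obstacle I anticipate is making the smoothing argument airtight in these degenerate cases, specifically ensuring that one can always find a pair for which the coefficient $A$ is strictly positive whenever the configuration is not already uniform. This is resolved cleanly by the case split on the number of positive coordinates described above. As an alternative, one could instead invoke Newton's inequalities $p_k^2\geq p_{k-1}p_{k+1}$ (where $p_k=e_k/\binom{n}{k}$) and telescope to obtain $p_r\leq p_1^{r}$, but the smoothing approach has the advantage of being self-contained and not requiring Newton's inequalities as a black box.
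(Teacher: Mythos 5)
Your proof is correct. Note, however, that the paper does not prove this statement at all: it is recorded as Fact~\ref{macineq} and invoked as the classical Maclaurin inequality (more precisely, the special case $p_r \leq p_1^r$ of the Maclaurin chain), so there is no in-paper argument to compare yours against. Your smoothing proof is a valid self-contained derivation: the decomposition $e_r = A x_ix_j + B(x_i+x_j) + C$ with $A = e_{r-2}$, $B = e_{r-1}$, $C = e_r$ of the frozen variables is exactly right, and replacing an unequal pair by its average strictly increases $e_r$ when $A>0$. One small remark on your degenerate case: since the hypothesis takes all $x_i$ strictly positive, the starting point already satisfies $e_r>0$, so the maximum over the simplex is positive and any maximizer necessarily has at least $r$ positive coordinates; the ``fewer than $r$ positive coordinates'' branch therefore never occurs for a maximizer, and your case split, while harmless, is slightly more than is needed. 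The alternative you mention (telescoping Newton's inequalities) is the more standard textbook route, but your argument has the advantage of using nothing beyond AM--GM for two variables and compactness.
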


%
%

The following lemma will be used in the proof of Theorem \ref{disjointstars} which generalizes the result to disjoint unions of stars.

\begin{lemma}\label{stars}
Stars are anti-common.
\end{lemma}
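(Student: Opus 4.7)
The plan is to apply Lemma~\ref{simpleineq} directly. For $H = K_{1,k}$ we have $m = k+1$, $e = k$, and $|\mathrm{Aut}(K_{1,k})| = k!$, so it suffices to establish
\[
rb_r(K_{1,k};n) \;\leq\; \frac{n^{k+1}\binom{r}{k}}{r^k} + O(n^k).
\]

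The key observation is that, for $k \geq 2$, every copy of $K_{1,k}$ in $K_n$ has a unique vertex of degree $k$, its \emph{center}, so the total number of rainbow stars can be obtained by fixing a center and summing. Fix an arbitrary $r$-edge-coloring of $K_n$ and a vertex $v$, and let $d_i(v)$ denote the number of edges of color $i$ incident to $v$. A rainbow copy of $K_{1,k}$ centered at $v$ is determined by picking a set of $k$ distinct colors $\{i_1,\ldots,i_k\} \subseteq [r]$ and then, for each chosen color, selecting one incident edge of that color. Hence the number of rainbow stars with center $v$ equals the elementary symmetric expression
\[
\sum_{\{i_1,\ldots,i_k\} \subseteq [r]} d_{i_1}(v)\,d_{i_2}(v) \cdots d_{i_k}(v).
\]

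Now Maclaurin's inequality (Fact~\ref{macineq}) applied with $x_i = d_i(v)$, combined with $\sum_{i=1}^r d_i(v) = n-1$, bounds this symmetric sum by $\binom{r}{k}\bigl(\tfrac{n-1}{r}\bigr)^k$. Summing over the $n$ possible centers gives
\[
rb_r(K_{1,k};n) \;\leq\; n\binom{r}{k}\left(\frac{n-1}{r}\right)^k \;\leq\; \frac{n^{k+1}\binom{r}{k}}{r^k},
\]
which is in fact stronger than the bound required by Lemma~\ref{simpleineq} (the $O(n^k)$ slack is not needed). The case $k=1$ is trivial since any single edge is vacuously rainbow.

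There is no real obstacle here beyond setting things up correctly: once one recognizes that the count of rainbow stars centered at $v$ is precisely the $k$-th elementary symmetric polynomial in the color-degree sequence $(d_1(v),\ldots,d_r(v))$, Maclaurin's inequality delivers the tight bound immediately, and the uniqueness of the center for $k \geq 2$ ensures no overcounting when summing over $v$.
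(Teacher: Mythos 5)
Your proof is correct and follows essentially the same route as the paper: count rainbow stars centered at each vertex $v$ as the elementary symmetric polynomial in the color-degrees at $v$, bound it via Maclaurin's inequality (Fact~\ref{macineq}) using $\sum_i d_i(v)=n-1$, and sum over the $n$ choices of center before invoking Lemma~\ref{simpleineq}. Your explicit handling of the $k=1$ case and the remark on uniqueness of the center are minor additions the paper omits, but the argument is the same.
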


\begin{proof}
Consider $S=K_{1,m-1}$ and note that
\[|\textrm{Aut}(S)|=(m-1)!.\]
By Lemma \ref{simpleineq}, It suffices to prove that for sufficiently large $n$, 
\[rb_r(K_{1,m-1};n) = \frac{\binom{r}{m-1}n^m}{r^{m-1}}+O(n^{m-1})\]
Given a vertex $v$ of $K_n$, let $q_i$ be the number of edges of color $i$ incident with $v$. Then the number of rainbow copies of $S$ with center $v$ is 
\[\sum_{\{i_1,i_2,\cdots, i_{m-1}\} \subseteq [r]} q_{i_1}q_{i_2}\cdots q_{i_{m-1}}.\]
Vertices of $K_n$ have degree $n-1$, so by Fact \ref{macineq} we have
\[\sum_{\{i_1,i_2,\cdots, i_{m-1}\} \subseteq [r]} q_{i_1}q_{i_2}\cdots q_{i_{m-1}}\leq\left(\frac{n-1}{r}\right)^{m-1}\binom{r}{m-1}.\]
Stars with centers $v$ and $v'$ are distinct if $v\neq v'$, therefore the total number of rainbow copies of $S$ in $K_n$ is at most
\[n\left(\frac{n-1}{r}\right)^{m-1}\binom{r}{m-1} = \frac{\binom{r}{m-1}n^m}{r^{m-1}}+O(n^{m-1}).\]
\end{proof}

\begin{theorem}\label{disjointstars}
Disjoint unions of stars are anti-common.
\end{theorem}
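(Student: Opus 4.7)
The plan is to apply Lemma \ref{simpleineq} to $H = \bigsqcup_{j=1}^k K_{1,a_j}$, writing $e = \sum_j a_j$ and $m = k + e$; it suffices to show $rb_r(H;n) \leq \frac{n^m \binom{r}{e}e!}{|\textrm{Aut}(H)|\, r^e} + O(n^{m-1})$. I would count labeled rainbow copies of $H$, parametrized by an ordered tuple of distinct centers $(v_1,\ldots,v_k) \in V(K_n)^k$ together with, for each star $j$, an ordered $a_j$-tuple of leaves whose edges to $v_j$ carry pairwise distinct colors, with all $e$ edge colors distinct across stars. Relaxing the requirement that the $m$ vertices are pairwise distinct costs only $O(n^{m-1})$ labeled copies, so it is enough to bound
\[
\Sigma := \sum_{(v_j) \in V(K_n)^k}\; \sum^{\ast}_{(\gamma_{j,s}) \in [r]^e}\; \prod_{j,s} q_{\gamma_{j,s}}(v_j),
\]
where $q_c(v)$ denotes the number of $c$-colored edges at $v$ and $\sum^{\ast}$ runs over $e$-tuples of pairwise distinct colors.

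Next, I would apply M\"obius inversion on the partition lattice of the $e$ slots to replace the distinct-color constraint with a signed sum:
\[
\sum^{\ast}_{(\gamma)} \prod_{j,s} q_{\gamma_{j,s}}(v_j) \;=\; \sum_{\pi} \mu(\pi)\, \prod_{B \in \pi} \sum_{c \in [r]} \prod_{(j,s) \in B} q_c(v_j),
\]
with $\mu(\pi) = \prod_B (-1)^{|B|-1}(|B|-1)!$. After interchanging with the outer sum over $(v_j)$, each block $B$ of size $\ell$ contributes a moment $\sum_c x_c^\ell$ with $x_c$ equal to $q_c(v_j)$ (when $B$ lies within a single star) or to $2 e_c$ (when $B$ spans multiple stars). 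The power-mean inequality, equivalent to Maclaurin's inequality (Fact \ref{macineq}) applied with repeated values, gives $\sum_c x_c^\ell \geq (\sum_c x_c)^\ell/r^{\ell-1}$. The discrete partition yields the main term $n^k (n-1)^e$, and plugging the power-mean lower bounds into each coarser partition, the algebraic identity
\[
\sum_{\pi} \mu(\pi) \prod_{B \in \pi} r^{1-|B|} \;=\; \frac{\binom{r}{e}e!}{r^e}
\]
(which is exactly the probability that $e$ i.i.d.\ uniform colors drawn from $[r]$ are pairwise distinct) forces $\Sigma \leq n^k(n-1)^e \binom{r}{e}e!/r^e + O(n^{m-1}) = \frac{n^m \binom{r}{e}e!}{r^e} + O(n^{m-1})$. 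Dividing by $|\textrm{Aut}(H)|$ then completes the verification.

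The main obstacle is that $\mu(\pi)$ alternates in sign with block parity, so one cannot just bound each M\"obius-weighted term independently; rather, one has to argue that the signed combination of deviations from the random-coloring moments is non-positive. In the matching case $H = M_k$ (all $a_j = 1$) this reduces to the chain $\sum_c e_c^\ell \geq \binom{n}{2}^\ell/r^{\ell-1}$, specializing for $M_2$ to the Cauchy--Schwarz inequality $\sum_c e_c^2 \geq \binom{n}{2}^2/r$. For larger $a_j$, analogous inequalities on vertex-level moments $\sum_{v,c} q_c(v)^\ell$ are needed, but in every case the above identity supplies the precise algebraic cancellation between M\"obius weights and power-mean bounds.
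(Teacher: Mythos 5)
There is a genuine gap at the exact point you flag as ``the main obstacle,'' and it is not a technicality --- it is the entire content of the theorem. After M\"obius inversion on the partition lattice, the weights $\mu(\pi)=\prod_B(-1)^{|B|-1}(|B|-1)!$ alternate in sign, so to get an \emph{upper} bound on $\Sigma$ you need upper bounds on the moment products for partitions with $\mu(\pi)>0$ (e.g.\ two blocks of size $2$) and lower bounds for those with $\mu(\pi)<0$. The power-mean inequality only supplies lower bounds $\sum_c x_c^\ell\geq(\sum_c x_c)^\ell/r^{\ell-1}$, so plugging it into every term does not yield an inequality in a consistent direction; the identity $\sum_\pi\mu(\pi)\prod_B r^{1-|B|}=\binom{r}{e}e!/r^e$ is true but does nothing to control the signed deviations. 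You state that ``one has to argue that the signed combination of deviations \ldots is non-positive'' but never do so, and no argument for it is offered. A second, independent problem is the claimed interchange: after M\"obius inversion the inner expression is $\prod_{B\in\pi}\sum_c\prod_{(j,s)\in B}q_c(v_j)$, and summing this over $(v_j)\in V(K_n)^k$ does \emph{not} factor block-by-block into moments $\sum_c x_c^\ell$ whenever two distinct blocks both contain slots from the same star $j$ (the two factors then share the variable $v_j$). So the reduction to single-variable moment inequalities is not valid as written.

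For comparison, the paper sidesteps both issues by never isolating a single star-forest shape. It fixes a set $C$ of $k$ centers, lets $q_i(C)$ be the number of color-$i$ edges leaving $C$, and observes that $\sum_{\{i_1,\ldots,i_{m-k}\}\subseteq[r]}q_{i_1}(C)\cdots q_{i_{m-k}}(C)$ counts \emph{all} rainbow star-forests on $m$ vertices with distinguishable centers $C$, aggregated over every partition shape $P$ at once. A single application of Maclaurin's inequality (Fact \ref{macineq}) --- with all terms positive, so no sign problem --- bounds this aggregate by $\binom{r}{m-k}\bigl(\frac{k(n-1)-\binom{k}{2}}{r}\bigr)^{m-k}$, and the multinomial theorem splits $k^{m-k}$ back into a sum over shapes. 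Since Proposition \ref{bounded} gives a matching lower bound for each individual shape and the sum of those lower bounds equals the aggregate upper bound, each $rb_r(S_P;n)$ is squeezed to its random-coloring value. If you want to salvage your route, you would need either a correct inclusion--exclusion with controlled signs (which is essentially a new argument) or to adopt this aggregation-and-squeeze idea.
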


\begin{proof}
Fix positive integers $k\leq m$ and let $\mathcal{P}^{\geq 2}_k(m)$ denote the set of integer partitions of $m$ into $k$ parts with each part having size at least 2. For $P=\{\{m_1,\ldots,m_k\}\}\in\mathcal{P}_k^{\geq 2}(m)$, let $S_P$ be a disjoint union of $k$ stars with components $S_{P,i}=K_{1,m_i-1}$ for $i=1,\ldots,k$. Let $m_{i_1}\leq \cdots\leq m_{i_{j(P)}}$ be the $j(P)$ distinct sizes of the stars in $S_P$ and let $M_s$ be the number of stars in $S_P$ of size $m_{i_s}$. Then defining $\gamma(P)=\prod_{i=1}^{j(P)}M_i!$, we have the number of automorphisms of $S_P$ is given by
\[|\textrm{Aut}(S_P)|=\gamma(P)\prod_{i=1}^{k}(m_{i}-1)!.\]
Given $P\in\mathcal{P}_k^{\geq 2}(m)$, let
\[\binom{m-k}{P-1}=\binom{m-k}{m_1-1,\ldots,m_k-1}\]
then we want to show for sufficiently large $n$
\[rb_r(S_P;n)=\binom{m-k}{P-1}\frac{\binom{r}{m-k}\binom{n}{m}m!}{\gamma(P)r^{m-k}}+O(n^{m-1}).\]

\begin{claim}\label{starclaim}\[\sum_{P\in\mathcal{P}_k(m)}\gamma(P)rb_r(S_P;n) \leq \sum_{P\in\mathcal{P}_k(m)}\binom{m-k}{P-1}{\frac{\binom{n}{m} m!\binom{r}{m-k}}{r^{m-k}}}\]
\end{claim}

\begin{proof}
Let $\mathcal{C}_k(n)$ denote the collection of sets of $k$ distinguishable vertices in $K_n$. Given $C\in\mathcal{C}_k(n)$, we will count all the number of rainbow disjoint unions of $k$ stars with exactly $m$ vertices and with $C$ the set of centers. Let $q_i(C)$ denote the number of edges of color $i$ incident to any vertex in $C$, except those edges between two vertices in $C$. Then the number of rainbow disjoint unions of $k$ stars with $m$ vertices and distinguishable centers $C$ is exactly
\begin{equation}\label{starssum}\sum_{\{i_1,\ldots,i_{m-k}\}\subseteq [r]}q_{i_1}(C)\cdots q_{i_{m-k}}(C).\end{equation}
Note that $\sum_{i=1}^r q_i(C)=k(n-1)-\binom{k}{2}$ and so by Fact \ref{macineq} the sum in (\ref{starssum}) is at most
\[\binom{r}{m-k}\left(\frac{k(n-1)-\binom{k}{2}}{r}\right)^{m-k}.\]
The lefthand size of the inequality of this claim counts rainbow subgraphs such that given $P$. if $S_{P,i}$ and $S_{P,j}$ have the same order they will be distinguishable in the count above. Therefore since $|\mathcal{C}_k(n)|=\binom{n}{k}k!$, we have
\begin{align*}\sum_{P\in\mathcal{P}_k(m)}\gamma(P)rb_r(S_P;n)&\leq\binom{n}{k}k!\binom{r}{m-k}\left(\frac{k(n-1)-\binom{k}{2}}{r}\right)^{m-k}\\
&\leq \frac{\binom{r}{m-k}n^m}{r^{m-k}}k^{m-k}+O(n^{m-1})
\end{align*}
Notice that
\[\{\{\{m_1-1,\ldots,m_k-1\}\}\,:\,\{\{m_1,\ldots,m_k\}\}\in\mathcal{P}_k^{\geq 2}(m)\}\]
is the set of integer partitions of $m-k$ into $k$ parts. Therefore, by the Multinomial Theorem, we can rewrite
\begin{align*}
\frac{\binom{r}{m-k}n^m}{r^{m-k}}k^{m-k}+O(n^{m-1})&=\frac{\binom{r}{m-k}n^m}{r^{m-k}}\sum_{\{\{m_1,\ldots,m_k\}\}\in\mathcal{P}_k(m)}\binom{m-k}{m_1-1,\ldots,m_k-1}+O(n^{m-1})\\
&=\sum_{P\in\mathcal{P}_k^{\geq 2}(m)}\binom{m-k}{P-1}\frac{\binom{r}{m-k}\binom{n}{m}m!}{r^{m-k}}+O(n^{m-1})
\end{align*}
which proves the claim.
\end{proof}

By Proposition \ref{bounded}, we have for each $P=\{\{m_1,\ldots,m_k\}\}\in\mathcal{P}_k^{\geq 2}(m)$,
\begin{align}\gamma(P)rb_r(S_P;n)&\geq \frac{(m-k)!\binom{r}{m-k}\binom{n}{m}m!}{\prod_{i=1}^k(m_i-1)!r^{m-k}}+O(n^{m-1})\\
&=\binom{m-k}{P-1}\frac{\binom{r}{m-k}\binom{n}{m}m!}{r^{m-k}}+O(n^{m-1}).
\end{align}

Therefore, Claim \ref{starclaim} and the inequality (3) above implies for each $P\in\mathcal{P}_k^{\geq 2}(m)$,
\[rb_r(S_P;n)=\binom{m-k}{P-1}\frac{\binom{r}{m-k}\binom{n}{m}m!}{\gamma(P)r^{m-k}}.\]
\end{proof}

\section{Graphs which are not anti-common}\label{not anti-common section}

Not all graphs are $r$-anti-common for all $r$, and here we will prove in particular that complete graphs and $K_4$ without an edge are not anti-common. We will also give sufficient conditions, based on the number of edges, for a graph to not be anti-common.

\subsection{Specific graphs which are not anti-common}

In order to show that a graph is not anti-common for some $r$, we will construct a coloring with more rainbow subgraphs than that guaranteed in Proposition \ref{bounded}. Our arguments will start with a fixed coloring of some $K_m$ for $m$ small and we will use a blow-up argument to construct a coloring of a larger $K_n$.
\begin{definition}
A \emph{blow-up} is an inductive coloring of $K_n$, where the edges are colored as follows. Pick $m\leq n$ and fix a coloring of $K_m$ with labeled vertices $v_1,\ldots,v_m$. Divide the vertices of $K_n$ into $m$ disjoint sets of size $\lfloor\frac{n}{m}\rfloor$ and $\lceil\frac{n}{m}\rceil$, namely $V_1,\ldots,V_m$. For $u_i\in V_i$ and $u_j\in V_j$, color the edge $u_iu_j$ the same color as the edge $v_iv_j$ in the coloring of $K_m$. Repeat this process with each $V_i$ until there are no vertices left to be split into $m$ disjoint sets. We call this a blow-up of the initial coloring of $K_m$ with $n$ vertices.
\end{definition}
\begin{figure}[t]\label{k5coloring}
\centering
\begin{tikzpicture}[x=1.2cm, y=1.2cm]
	\foreach \i in {1, 2, 3, 4, 5} {
		\setcounter{Angle}{\i * 360 / 5 + 18}
		\vertex (\i) at (\theAngle:1) [fill=black]{};
	}
  \draw (1) edge[red, thick] (2);
  \draw (2) edge[blue, thick] (3);
  \draw (3) edge[green, thick] (4);
  \draw (4) edge[black, thick] (5);
  \draw (5) edge[yellow, thick] (1);
  
  \draw (3) edge[red, thick] (5);
  \draw (4) edge[blue, thick] (1);
  \draw (5) edge[green, thick] (2);
  \draw (1) edge[black, thick] (3);
  \draw (2) edge[yellow, thick] (4);
\end{tikzpicture}
\caption{A 5-edge-coloring of $K_5$ with 10 rainbow copies of $K_4\backslash e$.}
\end{figure}
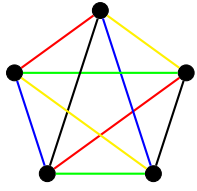

\begin{proposition}
The graph $K_4\backslash e$ is not 5-anti-common.
\end{proposition}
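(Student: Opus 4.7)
The goal is to beat the random lower bound $\frac{\binom{5}{5}5!}{5^5}=\frac{24}{625}$ from Proposition~\ref{bounded}. Since $K_4\backslash e$ has $4$ vertices, $5$ edges, and $|\textrm{Aut}(K_4\backslash e)|=4$, strictly exceeding $\frac{24}{625}$ in the limit will contradict $5$-anti-commonality. The plan is to use the coloring $c$ of $K_5$ from Figure~\ref{k5coloring} as the seed of a blow-up, and to bound $rb_5(K_4\backslash e;n)$ from below by counting only rainbow copies of $K_4\backslash e$ whose four vertices lie in four distinct parts of the blow-up.

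First, I would verify directly that $c$ contains exactly $10$ rainbow copies of $K_4\backslash e$. The coloring has a cyclic symmetry $i\mapsto i+1\pmod 5$ that permutes the color classes, so it suffices to count rainbow $K_4\backslash e$ supported on the single $4$-subset $\{1,2,3,4\}$ and multiply by $5$. A direct check of the six possible missing edges shows that $\{1,2,3,4\}$ supports exactly $2$ rainbow copies (those obtained by deleting edge $14$ or edge $23$), giving a total of $10$ by symmetry.

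For each $n$, let $B_n$ be the blow-up of $c$ with parts $V_1,\ldots,V_5$ of sizes $\lfloor n/5\rfloor$ or $\lceil n/5\rceil$. Every edge of $B_n$ between two distinct parts inherits its color from the corresponding edge of $c$, so any choice of $u_j\in V_{i_j}$ for a $4$-subset $\{i_1,\ldots,i_4\}\subseteq[5]$ induces a $K_4$ whose coloring is identical to the $K_4$ on $\{v_{i_1},\ldots,v_{i_4}\}$ in $K_5$. Consequently, the number of rainbow $K_4\backslash e$ on such a $4$-set of vertices equals the number of rainbow $K_4\backslash e$ on $\{v_{i_1},\ldots,v_{i_4}\}$ in $c$. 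Summing over all $4$-subsets of $[5]$ picks up each of the $10$ rainbow $K_4\backslash e$ in $c$ exactly once, and each such copy contributes $|V_{i_1}|\,|V_{i_2}|\,|V_{i_3}|\,|V_{i_4}|=(n/5)^4+O(n^3)$ rainbow copies in $B_n$; hence $rb_5(K_4\backslash e;n)\geq 10(n/5)^4+O(n^3)=\frac{10n^4}{625}+O(n^3)$.

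Since the total number of copies of $K_4\backslash e$ in $K_n$ is $6\binom{n}{4}=\frac{n^4}{4}+O(n^3)$, dividing and taking $n\to\infty$ yields $rbC_5(K_4\backslash e)\geq\frac{40}{625}=\frac{8}{125}>\frac{24}{625}$, so $K_4\backslash e$ is not $5$-anti-common. The only step requiring real care is the finite verification of the rainbow count in $c$; everything else is routine asymptotics, and the contributions from $4$-sets that lie in fewer than four parts are only additional rainbow copies that we happily discard.
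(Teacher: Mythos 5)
Your proof is correct and follows essentially the same approach as the paper: the same proper $5$-edge-coloring of $K_5$ containing $10$ rainbow copies of $K_4\backslash e$, blown up to $n$ vertices, compared against the random benchmark $\frac{6n^4}{625}+O(n^3)$. The only difference is that you count just the transversal copies, obtaining $\frac{10n^4}{625}+O(n^3)$, whereas the paper also solves the recurrence $F(n)\geq 5F\left(\frac{n}{5}\right)+10\left(\frac{n}{5}\right)^4$ to get the marginally better $\frac{n^4}{62}+O(n^3)$; since both bounds exceed the benchmark, your simplification is harmless.
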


\begin{proof}
Note that the 5-edge-coloring of $K_5$ in Figure \ref{k5coloring} contains 10 rainbow copies of $K_4\backslash e$. Given $n=5^k$ for $k$ a positive integer, let $F(n)$ be the number of rainbow copies of $K_4\backslash e$ contained in a blow-up of the coloring in Figure \ref{k5coloring} on $n$ vertices. Within each of the 5 parts, there are $5F\left(\frac{n}{5}\right)$ rainbow copies of $K_4\backslash e$ and there are $10\left(\frac{n}{5}\right)^4$ with one vertex in each part. Therefore
\[F(n)\geq5F\left(\frac{n}{5}\right)+10\left(\frac{n}{5}\right)^4\]
and solving this recurrence gives
\[F(n)\geq \frac{n^4}{62}+O(n^3).\]
There are 4 automorphisms of $K_4\backslash e$, hence
\begin{align*}
rb_r(K_4\backslash e;n)&\geq\frac{n^4}{62}+O(n^3)\\
&>\frac{6n^4}{625}+O(n^3)\\
&=\frac{\binom{n}{4}4!\binom{5}{5}5!}{4\cdot 5^5}+O(n^3).
\end{align*}
\end{proof}

In \cite{rainbowtriangles}, it was shown that $K_3$ is not $3$-anti-common. We will now prove for $a\geq 4$, $K_a$ is not $\binom{a}{2}$-anti-common.

\begin{theorem}\label{complete}
The complete graph $K_a$ is not $\binom{a}{2}$-anti-common.
\end{theorem}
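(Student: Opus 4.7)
The plan is to follow the blueprint of the $K_4\setminus e$ proof, starting from the edge coloring of $K_a$ in which every edge receives a distinct color from $\{1,\ldots,\binom{a}{2}\}$. Under this base coloring, $K_a$ itself is a single rainbow copy of $K_a$. I would then form an iterated blow-up of this coloring to obtain a $\binom{a}{2}$-edge-coloring of $K_{a^k}$: partition the vertices into $a$ equal parts $V_1,\ldots,V_a$ corresponding to the vertices $v_1,\ldots,v_a$ of the base $K_a$, color each edge between $V_i$ and $V_j$ with the color of $v_iv_j$, and recurse inside each $V_i$.

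Let $F(n)$ denote the number of rainbow copies of $K_a$ in this blow-up on $n=a^k$ vertices. Within each of the $a$ parts there are $F(n/a)$ rainbow copies of $K_a$, and choosing exactly one vertex from each part contributes $(n/a)^a$ further rainbow copies, because the resulting inter-part edges faithfully reproduce the rainbow base coloring. Hence $F(n)\geq aF(n/a)+(n/a)^a$, and iterating (or summing the geometric contribution from each level) yields $F(n)\geq \frac{n^a}{a^a-a}+O(n^{a-1})$. Since $|\mathrm{Aut}(K_a)|=a!$, and since Proposition \ref{monotone} allows a lower bound along $n=a^k$ to transfer to the limit, this gives $rbC_{\binom{a}{2}}(K_a)\geq a!/(a^a-a)$.

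To finish, I need to verify that this exceeds the random lower bound from Proposition \ref{bounded}, namely
\[\frac{a!}{a^a-a}>\frac{\binom{a}{2}!}{\binom{a}{2}^{\binom{a}{2}}}.\]
The clean way is to observe that the function $n\mapsto n!/n^n$ is strictly decreasing, since the ratio of consecutive terms is $\bigl(n/(n+1)\bigr)^n<1$. For $a\geq 4$ we have $\binom{a}{2}=a(a-1)/2>a$, so applying this monotonicity (and the trivial bound $a^a-a<a^a$) gives $\binom{a}{2}!/\binom{a}{2}^{\binom{a}{2}}<a!/a^a<a!/(a^a-a)$, as required.

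The main obstacle I anticipate is the last strict comparison: a direct Stirling computation works but is unwieldy and easily fumbled at the $a=4$ endpoint, whereas the monotonicity of $n!/n^n$ handles all $a\geq 4$ uniformly in one line. A minor technical point is that the blow-up construction is most naturally defined for $n=a^k$, but Proposition \ref{monotone} ensures that any lower bound along this subsequence propagates to the limiting constant $rbC_{\binom{a}{2}}(K_a)$, so no separate case analysis for non-power-of-$a$ values of $n$ is needed.
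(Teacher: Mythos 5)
Your proposal is correct and follows the paper's construction exactly: the same iterated blow-up of a rainbow-colored $K_a$, the same recurrence $F(n)\geq aF(n/a)+(n/a)^a$ with solution $F(n)\geq \frac{n^a}{a^a-a}+O(n^{a-1})$, and the same target inequality $\frac{a!}{a^a-a}>\frac{\binom{a}{2}!}{\binom{a}{2}^{\binom{a}{2}}}$. The one place you genuinely diverge is in verifying that inequality: the paper invokes two-sided Stirling-type bounds $e(n/e)^n\leq n!\leq en(n/e)^n$ to reduce to $\frac{\binom{a}{2}}{e^{\binom{a}{2}-1}}<\frac{1}{e^{a-1}}$, which must then be checked for $a\geq 4$, whereas you observe that $n\mapsto n!/n^n$ is strictly decreasing (ratio of consecutive terms is $(n/(n+1))^n<1$) and that $\binom{a}{2}>a$ for $a\geq 4$, so $\frac{\binom{a}{2}!}{\binom{a}{2}^{\binom{a}{2}}}<\frac{a!}{a^a}<\frac{a!}{a^a-a}$ in one line. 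Your version is cleaner and avoids the endpoint fiddliness at $a=4$; both are valid. Your explicit appeal to Proposition \ref{monotone} to transfer the bound along the subsequence $n=a^k$ to the limit is a small point the paper leaves implicit, and it is handled correctly.
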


\begin{proof}
Consider a rainbow $K_a$, i.e. let $c$ be an $\binom{a}{2}$-edge-coloring of $K_a$ such that each edge is a different color. Given $n=a^k$ for $k$ a positive integer, let $F(n)$ denote the number of rainbow copies of $K_a$ contained in a blow-up of the coloring $c$ on $n$ vertices. There are $aF\left(\frac{n}{a}\right)$ rainbow copies of $K_a$ within each of the $a$ parts, and there are $\left(\frac{n}{a}\right)^a$ rainbow copies of $K_a$ with exactly one vertex from each part. Therefore
\[F(n)\geq aF\left(\frac{n}{a}\right)+\left(\frac{n}{a}\right)^a\]
and solving this recurrence gives
\[F(n)\geq\frac{n^a}{a^a-a}+O(n^{a-1}).\]
Therefore, since the number of automorphisms of $K_a$ is $a!$, in order to show
\[\frac{n^a}{a^a-a}+O(n^{a-1})>\frac{\binom{n}{a}\binom{a}{2}!}{\binom{a}{2}^{\binom{a}{2}}}\]
we will prove
\begin{equation}\label{factorial}
\frac{a!}{a^a-a}>\frac{\binom{a}{2}!}{\binom{a}{2}^{\binom{a}{2}}}.\end{equation}
We will use the following bounds on the factorial function
\[e\left(\frac{\binom{a}{2}}{e}\right)^{\binom{a}{2}}\leq \binom{a}{2}!\leq e\binom{a}{2}\left(\frac{\binom{a}{2}}{e}\right)^{\binom{a}{2}}\]
where $e$ is the base of the natural logarithm. From this we have
\[
\frac{\binom{a}{2}!}{\binom{a}{2}^{\binom{a}{2}}}\leq \frac{\binom{a}{2}}{e^{\binom{a}{2}-1}}
\]
and also using the inequality from (\ref{factorial}), $\frac{a!}{a^a-a}\geq\frac{1}{e^{a-1}}$ and therefore it's enough to show
\[\frac{\binom{a}{2}}{e^{\binom{a}{2}-1}}<\frac{1}{e^{a-1}}.\]
One can check that this inequality holds for $a\geq 4$ which concludes the proof.
\end{proof}

\subsection{Sufficient conditions for not anti-commonality}

In what follows $\log$ represents the natural logarithm. We will also be using both sides of the Stirling's approximation given below.
\begin{theorem}[Stirling's Approximation]
\[\sqrt{2\pi n}\left(\frac{n}{e}\right)^n\leq n!\leq \sqrt{2\pi n}\left(\frac{n}{e}\right)^ne^{\frac{1}{12n}}\]
\end{theorem}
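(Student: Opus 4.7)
The plan is to prove this classical Stirling approximation by logarithmic analysis. I would define $T_n := \log n! - (n + \tfrac{1}{2})\log n + n$, so that $n! = n^{n+1/2} e^{-n} e^{T_n}$, and bracket $T_n$ between two tight estimates of a common limit that is eventually identified with $\log\sqrt{2\pi}$. The target bound is then equivalent to $L \leq T_n \leq L + \tfrac{1}{12n}$ with $L = \log\sqrt{2\pi}$.

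First I would compute the one-step difference
\[
T_n - T_{n+1} = (n + \tfrac{1}{2})\log(1 + 1/n) - 1.
\]
Writing $\log(1+1/n) = \int_n^{n+1} \tfrac{dx}{x}$ and substituting $u = x - n - \tfrac{1}{2}$, I would recast this difference as the explicit integral
\[
T_n - T_{n+1} \;=\; \int_{0}^{1/2} \frac{2u^2}{(n+1/2)^2 - u^2}\, du.
\]
Positivity of the integrand immediately shows that $\{T_n\}$ is strictly decreasing. Since $(n+1/2)^2 - u^2 \geq (n+1/2)^2 - 1/4 = n(n+1)$ on the range of integration, bounding the denominator gives
\[
T_n - T_{n+1} \;\leq\; \frac{1}{12\,n(n+1)} \;=\; \frac{1}{12n} - \frac{1}{12(n+1)},
\]
which rearranges to say that the auxiliary sequence $T_n - \tfrac{1}{12n}$ is increasing. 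Thus $T_n$ decreases and $T_n - \tfrac{1}{12n}$ increases, and both are monotone and bounded, so they share a common limit $L$. Squeezing yields $L \leq T_n \leq L + \tfrac{1}{12n}$ for every $n$.

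It remains to identify $e^L = \sqrt{2\pi}$. For this I would invoke the Wallis product formula in the form $\lim_{n\to\infty} \frac{4^n (n!)^2}{(2n)! \sqrt{n}} = \sqrt{\pi}$. Substituting $n! \sim e^L \sqrt{n}\,(n/e)^n$ into both $n!$ and $(2n)!$ and simplifying the resulting powers of $2$ and $n$ gives $e^L/\sqrt{2} = \sqrt{\pi}$, hence $e^L = \sqrt{2\pi}$. Exponentiating the bounds on $T_n$ then recovers exactly the displayed inequalities.

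The main obstacle is this last identification of the constant. The monotonicity arguments are self-contained once the integral representation of $T_n - T_{n+1}$ is in hand, and the bound that produces the sharp factor $\tfrac{1}{12n}$ is essentially a single estimate on the denominator. What does require genuine outside input is pinning $e^L$ down to $\sqrt{2\pi}$, since this is where the transcendental constant $\pi$ enters; the standard route is Wallis's product, itself typically derived from the recursion for $\int_0^{\pi/2}\sin^k\theta\, d\theta$. In the context of this paper Stirling's inequality is of course invoked as a tool, and the natural choice is to cite it as a classical result rather than to reproduce the proof.
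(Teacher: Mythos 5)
Your proposed proof is correct, and it is worth noting at the outset that the paper itself offers no proof of this statement: Stirling's approximation is stated as a classical theorem and simply invoked as a tool in the density arguments of Section 4, exactly as you anticipated in your closing remark. So there is no "paper's approach" to compare against; what you have supplied is a complete, self-contained derivation where the source has none. Checking your steps: the telescoping identity $T_n - T_{n+1} = (n+\tfrac12)\log(1+1/n) - 1$ is right, and the symmetrization $\log(1+1/n) = \int_{-1/2}^{1/2}\frac{du}{n+1/2+u} = \int_0^{1/2}\frac{2(n+1/2)}{(n+1/2)^2-u^2}\,du$ does yield $T_n - T_{n+1} = \int_0^{1/2}\frac{2u^2}{(n+1/2)^2-u^2}\,du$; the denominator bound $(n+\tfrac12)^2 - u^2 \geq n(n+1)$ on $[0,\tfrac12]$ gives $\int_0^{1/2}\frac{2u^2}{n(n+1)}\,du = \frac{1}{12n(n+1)}$, which telescopes correctly; and the two monotone sequences $T_n$ and $T_n - \tfrac{1}{12n}$ squeeze to a common limit $L$ with $L \leq T_n \leq L + \tfrac{1}{12n}$, which exponentiates to the displayed two-sided inequality. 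The Wallis computation $e^L/\sqrt{2} = \sqrt{\pi}$ also checks out. This is the standard Robbins-style argument and it delivers precisely the constants $\sqrt{2\pi n}$ and $e^{1/(12n)}$ that the paper's Theorems 4.4 and Corollary 4.5 rely on, so it would serve as a legitimate proof were the authors to include one; citing the result, as both you and they prefer, is equally acceptable.
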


\begin{theorem}\label{dense1}
Suppose $H$ is a graph with $m$ vertices and $e$ edges and let $c$ be a constant such that $2\pi m(1-c)>1$ and
\[c+(1-c)\log(1-c)\geq\frac{2}{m-1}+\frac{1}{\binom{m}{2}^212}.\]
If $e\geq c\binom{m}{2}$, then $H$ is not $\binom{m}{2}$-anti-common.
\end{theorem}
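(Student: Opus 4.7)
The plan is to follow the blow-up strategy of Theorem \ref{complete}. Fix a rainbow coloring of $K_m$, using all $M := \binom{m}{2}$ colors with one per edge, and consider its iterated blow-up on $n = m^k$ vertices. Let $F(n)$ denote the number of rainbow copies of $H$ in this blow-up. Within each of the $m$ blown-up parts, we obtain $m F(n/m)$ rainbow copies. Selecting one vertex from each of the $m$ parts yields a rainbow $K_m$, which contains $m!/|\textrm{Aut}(H)|$ copies of $H$, each automatically rainbow; there are $(n/m)^m$ such selections. Thus
\[
F(n) \;\geq\; m F(n/m) + \frac{m!}{|\textrm{Aut}(H)|}\left(\frac{n}{m}\right)^m,
\]
and solving the recurrence exactly as in Theorem \ref{complete} gives $F(n) \geq \frac{m!}{|\textrm{Aut}(H)|(m^m - m)}\, n^m + O(n^{m-1})$.

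To conclude that $H$ is not $M$-anti-common, I need $F(n)$ to asymptotically exceed the random benchmark $\binom{n}{m}\frac{m!}{|\textrm{Aut}(H)|} \cdot \frac{\binom{M}{e} e!}{M^e}$. After cancelling common factors this reduces to the purely numerical inequality
\[
\frac{m!}{m^m - m} \;>\; \frac{M!}{(M-e)!\, M^e}.
\]
The right-hand side is decreasing in $e$ (each additional factor $(M-i)/M$ is at most $1$), so it is enough to verify this at the smallest admissible value $e = \lceil cM \rceil$, which for the purposes of Stirling I take to be $e = cM$.

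The main calculation is then the Stirling bookkeeping. Using the Stirling lower bounds on $m!$ and $(M - cM)!$, the upper bound on $M!$, and $m^m - m \leq m^m$, the inequality becomes, after taking logs and combining terms,
\[
M\bigl[c + (1-c)\log(1-c)\bigr] + \tfrac{1}{2}\log\bigl[2\pi m (1-c)\bigr] \;>\; m + \tfrac{1}{12 M}.
\]
The condition $2\pi m (1-c) > 1$ makes the logarithmic term nonnegative, and dividing by $M$ and using $m/M = 2/(m-1)$ leaves exactly the hypothesized inequality $c + (1-c)\log(1-c) \geq \tfrac{2}{m-1} + \tfrac{1}{12\binom{m}{2}^2}$.

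The main technical obstacle is the Stirling bookkeeping: one must pick each bound in the correct direction and carefully track the $\sqrt{\cdot}$ prefactors together with the $e^{1/(12n)}$ correction terms so that the hypothesis on $c$ emerges cleanly. The role of the condition $2\pi m (1-c) > 1$ is precisely to force the leftover $\sqrt{\cdot}$ contribution to have a nonnegative logarithm, which can then be absorbed rather than fighting the desired inequality.
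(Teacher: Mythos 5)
Your proposal matches the paper's proof in all essentials: the same iterated blow-up of a rainbow $K_m$ giving $\frac{m!}{|\textrm{Aut}(H)|(m^m-m)}n^m$ rainbow copies, the same reduction (after cancelling $|\textrm{Aut}(H)|$ and using monotonicity in $e$) to a numerical inequality at $e = c\binom{m}{2}$, and the same Stirling bookkeeping in which the hypothesis $2\pi m(1-c)>1$ absorbs the square-root prefactor and the $e^{1/(12M)}$ correction produces the $\frac{1}{12\binom{m}{2}^2}$ term. The paper merely runs the computation forward from the hypothesis rather than backward from the target inequality, so this is the same argument.
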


\begin{proof}
Let $H$ be a graph which satisfies the hypothesis above and consider a rainbow coloring of $H$. Blow-up this coloring to $n$ vertices and similar work as that in the proof of Theorem \ref{complete} gives that the number of rainbow copies of $H$ in $K_n$ is at least
\[\frac{n^mm!}{m^m}+O(n^{m-1}).\]
From the relationships between $c$ and $m$ we have
\[c\binom{m}{2}-\frac{1}{\binom{m}{2}12}+(1-c)\binom{m}{2}\log(1-c)-m\geq 0\]
and so raising both sides by the base of the logarithm $e$ gives
\[e^{c\binom{m}{2}-\frac{1}{\binom{m}{2}12}-m}(1-c)^{\binom{m}{2}(1-c)}\geq 1.\]
Then since $2\pi m(1-c)>1$ we have
\begin{align*}\sqrt{2\pi m(1-c)}e^{c\binom{m}{2}-\frac{1}{\binom{m}{2}12}-m}(1-c)^{\binom{m}{2}(1-c)}&>1\\
\frac{\sqrt{2\pi m}}{e^m}&>\frac{e^{\frac{1}{\binom{m}{2}12}}}{\sqrt{1-c}e^{c\binom{m}{2}}(1-c)^{\binom{m}{2}(1-c)}}\\
&=\frac{e^{\frac{1}{\binom{m}{2}12}}\left(\frac{\binom{m}{2}}{e}\right)^{\binom{m}{2}}}{\binom{m}{2}^{c\binom{m}{2}}\left(\frac{\binom{m}{2}(1-c)}{e}\right)^{\binom{m}{2}(1-c)}\sqrt{1-c}}\\
&\geq\frac{\binom{m}{2}!}{\binom{m}{2}^{c\binom{m}{2}}\left(\binom{m}{2}-c\binom{m}{2}\right)!\sqrt{1-c}}\\
&=\frac{\binom{\binom{m}{2}}{c\binom{m}{2}}\left(c\binom{m}{2}\right)!}{\binom{m}{2}^{c\binom{m}{2}}}\\
&\geq\frac{\binom{\binom{m}{2}}{e}e!}{\binom{m}{2}^{e}}.
\end{align*}
Using Stirling's approximation, we have
\[\frac{\sqrt{2\pi m}}{e^m}\leq \frac{m!}{m^m}.\]
and therefore
\[\frac{n^mm!}{m^m}+O(n^{m-1})>\frac{n^m\binom{\binom{m}{2}}{e}e!}{\binom{m}{2}^{e}}+O(n^{m-1})\]
\end{proof}

\begin{corollary}\label{dense2}
Let $H$ be a graph on $m$ vertices and $e$ edges such that 
\[e>m\sqrt{m-1}.\]
Then for $m\geq 6$, $H$ is not $\binom{m}{2}$-anti-common.
\end{corollary}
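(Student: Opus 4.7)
The plan is to deduce the corollary as a direct application of Theorem \ref{dense1} with the specific choice $c := 2/\sqrt{m-1}$. This choice is calibrated by the hypothesis: $e > m\sqrt{m-1}$ is precisely the condition $e > c\binom{m}{2}$ under this $c$, since
\[c\binom{m}{2} \;=\; \frac{2}{\sqrt{m-1}}\cdot\frac{m(m-1)}{2} \;=\; m\sqrt{m-1}.\]
So the edge-count hypothesis of Theorem \ref{dense1} is immediate. It remains to verify, for $m\geq 6$, that this $c$ also satisfies $2\pi m(1-c) > 1$ and the logarithmic inequality $c+(1-c)\log(1-c)\geq \frac{2}{m-1}+\frac{1}{12\binom{m}{2}^2}$.

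The condition $2\pi m(1-c)>1$ is the easy half. For $m\geq 6$, $c\leq 2/\sqrt{5}<0.9$, so $1-c>0.1$ and $2\pi m(1-c)>12\pi\cdot 0.1>1$. Since $c$ decreases in $m$ while $m$ grows, the bound only improves as $m$ increases.

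The main calculation is the logarithmic inequality. I would expand the function $f(c):=c+(1-c)\log(1-c)$ via the series $-\log(1-c)=\sum_{k\geq 1}c^{k}/k$ and integrate termwise to obtain
\[f(c) \;=\; \sum_{k=2}^{\infty}\frac{c^{k}}{k(k-1)} \;=\; \frac{c^{2}}{2}+\frac{c^{3}}{6}+\frac{c^{4}}{12}+\cdots,\]
a sum of positive terms on $(0,1)$. With $c=2/\sqrt{m-1}$ the quadratic term is $c^{2}/2=2/(m-1)$, matching the main term on the right-hand side exactly, and the cubic term is $c^{3}/6=\frac{4}{3(m-1)^{3/2}}$. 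Thus it suffices to show this cubic contribution dominates the correction $\frac{1}{12\binom{m}{2}^{2}}=\frac{1}{3m^{2}(m-1)^{2}}$, which reduces to the trivial inequality $4m^{2}\sqrt{m-1}\geq 1$.

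There is no real obstacle here: the proof is essentially bookkeeping around Theorem \ref{dense1} once the correct value of $c$ is identified. The one mild subtlety is checking that the third-order term of the Taylor expansion of $f(c)$ absorbs the small additive correction $\frac{1}{12\binom{m}{2}^{2}}$, and this is exactly where the assumption $m\geq 6$ enters (in fact, it holds for all $m\geq 2$, so the $m\geq 6$ cutoff is actually driven by the $2\pi m(1-c)>1$ condition).
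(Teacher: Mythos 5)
Your proof is correct and follows essentially the same route as the paper's: the same choice $c=2/\sqrt{m-1}$, the same series expansion $c+(1-c)\log(1-c)=\sum_{k\geq 2}\frac{c^k}{k(k-1)}$, and the same observation that the quadratic term matches $\frac{2}{m-1}$ while the cubic term absorbs the correction $\frac{1}{12\binom{m}{2}^2}$. One small caveat on your closing parenthetical: the hypothesis $m\geq 6$ is also what guarantees $c=2/\sqrt{m-1}<1$, without which $\log(1-c)$ and the series expansion are not even defined, so the cutoff is not driven solely by the condition $2\pi m(1-c)>1$.
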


\begin{proof}
Let $H$ be a graph that satisfies the hypothesis and set $c=\frac{2}{\sqrt{m-1}}$. Since $2\pi m(1-c)>1$ for $m\geq 6$, we can apply Proposition \ref{dense1} and thus it suffices to show
\[c+(1-c)\log(1-c)\geq\frac{2}{m-1}+\frac{1}{\binom{m}{2}^212}.\]
For $m\geq 6$ we also have $|c|<1$, so we can expand the log function as follows
\begin{align*}
c+(1-c)\log(1-c)&=c+(1-c)\left(-c-\frac{c^2}{2}-\frac{c^3}{3}-\cdots\right)\\
&=\sum_{i=2}^\infty\frac{1}{i(i-1)}c^i\\
&=\frac{2}{m-1}+\frac{4}{3(m-1)^{3/2}}+\sum_{i=4}^\infty\frac{1}{i(i-1)}\left(\frac{2}{\sqrt{m-1}}\right)^i\\
&>\frac{2}{m-1}+\frac{1}{\binom{m}{2}^212}.
\end{align*}
\end{proof}

Corollary \ref{dense2} shows that for $n$ large enough, any bipartite graph of positive density is not anticommon. In particular, a random bipartite graph will satisfy the hypotheses of Corollary \ref{dense2} with probability tending to 1, giving the following corollary which is in sharp contrast to Sidorenko's conjecture. 

\begin{corollary}
Almost all bipartite graphs are not anti-common
\end{corollary}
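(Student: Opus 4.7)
The plan is to apply Corollary \ref{dense2} directly. What needs to be checked is that a uniformly random bipartite graph on $m$ vertices satisfies the hypothesis $e > m\sqrt{m-1}$ with probability tending to $1$ as $m \to \infty$.

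First I would fix a concrete model. Let $H$ be the random bipartite graph with parts $A$ and $B$ of sizes $\lfloor m/2 \rfloor$ and $\lceil m/2 \rceil$, obtained by including each of the $|A||B|$ possible bipartite edges independently with probability $1/2$. This is the uniform distribution on the $2^{|A||B|}$ labeled bipartite graphs with this bipartition. Alternative natural models (varying the part sizes, or sampling the bipartition as well) behave identically in the limit, since almost all bipartite graphs in any reasonable model have a near-balanced bipartition and edge density close to $1/2$.

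Next I would estimate the number of edges. The expected value is $\mathbb{E}[e(H)] = |A||B|/2 = \Theta(m^2)$, which dwarfs the threshold $m\sqrt{m-1} = \Theta(m^{3/2})$. A standard Chernoff bound gives $\Pr\bigl[e(H) \leq \tfrac{1}{2}\mathbb{E}[e(H)]\bigr] \leq \exp(-\Omega(m^2))$, which tends to $0$. Hence with probability tending to $1$, $e(H) = \Omega(m^2) > m\sqrt{m-1}$.

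Finally, for $m \geq 6$ (which holds for all sufficiently large $m$), Corollary \ref{dense2} then implies that $H$ is not $\binom{m}{2}$-anti-common, hence not anti-common. The main ``obstacle'' is really just a bookkeeping choice: once a reasonable random bipartite model is fixed, the enormous gap between the typical edge count ($\Theta(m^2)$) and the required threshold ($\Theta(m^{3/2})$) makes the concentration argument essentially automatic, and the conclusion follows from Corollary \ref{dense2}.
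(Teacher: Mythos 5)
Your proposal is correct and follows essentially the same route as the paper: the authors likewise observe that a random bipartite graph has edge density bounded away from zero, hence $\Theta(m^2)$ edges, comfortably exceeding the $m\sqrt{m-1}$ threshold of Corollary \ref{dense2}. You simply spell out the random model and the concentration step, which the paper leaves implicit.
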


If Sidorenko's conjecture is true, this is very different behavior from the monochromatic situation. 

\section{Future directions}

As in the Ramsey case, we wish to establish an implication between a graph being $r$-anti-common and $(r+1)$-anti-common. Through our investigation of this problem, we have shown the following inequality.

\begin{proposition}Let $H$ be a graph with $e$ edges, then
\[rb_{r+1}(H;n) \geq rb_r(H;n) \geq \left(\frac{(r+e)(r+1-e)}{r(r+1)}\right)rb_{r+1}(H;n).\]
\end{proposition}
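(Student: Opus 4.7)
The first inequality is essentially free: any $r$-coloring of $K_n$ can be viewed as an $(r+1)$-coloring in which one color happens to be unused, and this re-labeling does not change the number of rainbow copies of $H$. Applying this to an optimal $r$-coloring immediately yields $rb_{r+1}(H;n) \geq rb_r(H;n)$.

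For the second inequality, the plan is to start with an optimal $(r+1)$-coloring $c$ of $K_n$, so $c$ witnesses $rb_{r+1}(H;n)$ rainbow copies of $H$, and then produce an $r$-coloring by merging two colors. For an unordered pair $\{i,j\} \subseteq [r+1]$, let $c_{ij}$ denote the $r$-coloring obtained from $c$ by replacing every edge of color $j$ with color $i$. A rainbow copy of $H$ under $c$ uses some set $S$ of exactly $e$ of the $r+1$ colors, and it remains rainbow under $c_{ij}$ if and only if $\{i,j\} \not\subseteq S$; this fails for exactly $\binom{e}{2}$ of the $\binom{r+1}{2}$ possible pairs $\{i,j\}$.

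The key step is now a standard averaging argument: summing over all pairs $\{i,j\}$,
\[
\sum_{\{i,j\}\subseteq[r+1]} (\text{rainbow copies under } c_{ij}) \;=\; \Bigl(\tbinom{r+1}{2} - \tbinom{e}{2}\Bigr)\, rb_{r+1}(H;n),
\]
so some particular pair $\{i,j\}$ achieves at least the average, and the resulting $r$-coloring gives
\[
rb_r(H;n) \;\geq\; \frac{\binom{r+1}{2} - \binom{e}{2}}{\binom{r+1}{2}}\, rb_{r+1}(H;n).
\]
The final step is the algebraic identity $r(r+1) - e(e-1) = (r+e)(r+1-e)$, which rewrites the above fraction as $\frac{(r+e)(r+1-e)}{r(r+1)}$ and completes the proof.

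There is no real obstacle here — the argument is a short averaging/counting argument — but the one point to be a little careful about is that when a rainbow copy of $H$ uses at most one of $i,j$, it survives in $c_{ij}$ with all $e$ of its edges still bearing distinct colors (no further collisions can be introduced by renaming a color the copy does not use), which is what makes the counting above exact rather than merely an inequality.
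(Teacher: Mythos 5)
Your proof is correct, and the second inequality is obtained by a genuinely different averaging argument than the paper's. The paper starts from an optimal $(r+1)$-coloring, picks a uniformly random color $r'$, and recolors every edge of color $r'$ independently and uniformly from the remaining $r$ colors; a two-step expectation computation (expected number of rainbow copies meeting color $r'$ is $\frac{e}{r+1}\,rb_{r+1}(H;n)$, each surviving with probability $\frac{r-e+1}{r}$) yields the same coefficient. You instead average over the $\binom{r+1}{2}$ deterministic colorings obtained by merging a pair of colors, and the identity $\binom{r+1}{2}-\binom{e}{2} = \tfrac{1}{2}\bigl(r(r+1)-e(e-1)\bigr) = \tfrac{1}{2}(r+e)(r+1-e)$ recovers exactly the paper's constant. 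Your version has the advantage of being an exact count rather than a lower bound on an expectation (merging colors can never create new rainbow copies, and a copy survives precisely when its $e$-element color set omits at least one of the two merged colors), and it avoids the paper's slightly delicate point that the random redistribution of the deleted color class must be analyzed edge by edge; the paper's version has the advantage of producing a coloring in which all $r+1$ colors are genuinely collapsed to $r$ in a single uniform random step, which fits the probabilistic style of the rest of the paper. Both arguments are complete; it is a nice sanity check that the two different averaging schemes give the identical bound.
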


\begin{proof}
Since the set of $(r+1)$-edge-colorings contains the set of $r$-edge-colorings, the left inequality follows immediately. Now consider an $(r+1)-$edge-coloring of $K_n$ such that the number of rainbow copies of $H$ is exactly $rb_{r+1}(H;n)$. Randomly choose a color from $[r+1]$ and call it $r'$. For all edges colored $r'$, recolor them randomly from the set of colors $[r+1]\backslash\{r'\}$. In the initial coloring, the expected number of rainbow copies of $H$ with one edge colored $r'$ is 
\[\frac{rb(G,n,r+1)e}{r+1}.\]
With probability $\frac{r-e+1}{r}$, each of these rainbow subgraphs will remain rainbow in the new coloring. Therefore the expected number of rainbow copies of $H$ in the new coloring is
\[\left(rb_{r+1}(H;n) - \frac{rb_{r+1}(H;n)e}{r+1}\right) + \frac{rb_{r+1}(H;n)e(r-e+1)}{r(r+1)} = \left(\frac{(r+e)(r+1-e)}{r(r+1)}\right) rb_{r+1}(H;n). \]
This implies that there exists such a coloring of $K_n$ with $r$ colors and hence
\[\left(\frac{(r+e)(r+1-e)}{r(r+1)}\right)rb_{r+1}(H;n)\leq rb_r(H;n).\]
\end{proof}

This inequality leads us to believe that the implication below is in fact true.

\begin{conjecture}
If $H$ is not $r$-anti-common, then $H$ is not $(r+1)$-anti-common.
\end{conjecture}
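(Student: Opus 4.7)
My plan is to prove the contrapositive: assume $H$ is $(r+1)$-anti-common, so that $rbC_{r+1}(H) = \binom{r+1}{e}e!/(r+1)^e$, and conclude that $H$ is $r$-anti-common. Since Proposition \ref{bounded} already gives $rbC_r(H) \geq \binom{r}{e}e!/r^e$, the task reduces to establishing the matching upper bound.

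The natural mechanism is to start from an $r$-coloring $c$ of $K_n$ whose rainbow density approaches $rbC_r(H)$ and to construct from it an $(r+1)$-coloring $c'$ to which the anti-common hypothesis applies. Two candidate constructions suggest themselves: (i) independently recolor each edge of $c$ to color $r+1$ with some probability $p$, or (ii) split one existing color of $c$ into itself and color $r+1$ via an independent biased coin on each of its edges. In either case I would compute $\mathbb{E}[\text{rainbow copies of } H \text{ in } c']$ by classifying the copies of $H$ in $c$ by their color type---the integer partition $\lambda \vdash e$ describing the multiplicities of the colors on the copy's edges. A short case analysis shows that only copies with $\lambda = (1^e)$ (already rainbow in $c$) or $\lambda = (2, 1^{e-2})$ (one repeated pair, everything else distinct) can become rainbow in $c'$ with positive probability, so $\mathbb{E}[\text{rainbow in } c']$ is a non-negative linear combination of the $c$-densities of these two types. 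Bounding this expectation above by $rbC_{r+1}(H;n)$, taking $n \to \infty$, and optimizing the recoloring parameter should then yield the desired inequality $rbC_r(H) \leq \binom{r}{e}e!/r^e$.

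The main obstacle is tightness. Substituting $rbC_{r+1}(H) = \binom{r+1}{e}e!/(r+1)^e$ into the inequality of the preceding proposition simplifies to $rbC_r(H) \geq (r+e)\binom{r}{e}e!/[r(r+1)^e]$, which is strictly smaller than $\binom{r}{e}e!/r^e$ once $e \geq 2$; so the rainbow-to-rainbow transition probability alone is not strong enough to force the matching upper bound when we try to run the analogous construction in reverse. A successful argument will therefore need to make genuine use of the almost-rainbow type $\lambda = (2, 1^{e-2})$, presumably via a structural inequality tying its count to the rainbow count and to the total number of copies of $H$ in an extremal $r$-coloring. An alternative direction is a stability argument: if $H$ is $(r+1)$-anti-common, then any near-extremal $(r+1)$-coloring must be quantitatively close to a uniform random coloring, and pulling this structural information back through a color-merging or splitting operation ought to force every near-extremal $r$-coloring to be close to uniform as well, yielding $rbC_r(H) = \binom{r}{e}e!/r^e$.
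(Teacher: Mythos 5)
This statement is not proved in the paper: it appears as an open \emph{conjecture}, and the only thing the paper establishes in its direction is the preceding Proposition, namely the inequality $rb_r(H;n)\geq \frac{(r+e)(r+1-e)}{r(r+1)}rb_{r+1}(H;n)$, which the authors explicitly note merely ``leads us to believe'' the implication. Your proposal does not prove the statement either, and you say so yourself. What you have is a correct reduction to the contrapositive, a correct classification of which color types can become rainbow under a random split/recoloring (only the types $(1^e)$ and $(2,1^{e-2})$), and a correct computation showing that the resulting inequality is quantitatively too weak: dropping the almost-rainbow term gives only $R \leq \bigl(\tbinom{r+1}{e}e!/(r+1)^e+o(1)\bigr)N$, and since $\tbinom{r}{e}e!/r^e=\prod_{i=0}^{e-1}(1-i/r)$ is increasing in $r$, this never reaches the target $\tbinom{r}{e}e!/r^e$; your simplification of the Proposition's bound to $(r+e)\tbinom{r}{e}e!/[r(r+1)^e]$ is also correct and confirms the shortfall.

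The genuine gap is the step you flag but do not supply: to make the recoloring argument close, one needs a \emph{lower} bound on the number of copies of $H$ of type $(2,1^{e-2})$ in a near-extremal $r$-coloring, in terms of the rainbow count and the total count, strong enough that the term $2p(1-p)^{e-1}A$ absorbs the deficit for some choice of $p>0$. No such inequality is known, and nothing in your sketch indicates how to obtain one; a coloring could in principle have many rainbow copies without a correspondingly large supply of almost-rainbow copies, and the proposed stability alternative is likewise only named, not executed. So the proposal should be read as a (sound and well-motivated) analysis of why the obvious approach fails --- essentially running the paper's own Proposition in reverse and hitting the same wall --- rather than as a proof; the statement remains open in the paper and in your write-up.
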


There are also many other classes of graphs whose anti-commonality have yet to be studied. Preliminary results on cycles lead us to believe that for $k\geq 3$, cycles of length $k$ are not $k$-anti-common. One can show using the blow-up method in Section \ref{not anti-common section} that $C_4$ is not $4$-anti-common and that $C_5$ is not $5$-anti-common. It is also conjectured that $P_4$ is 3-anti common---flag algebras (on 5 vertex flags) give an upper bound of approximately $0.22222241$, nearly matching the lower bound of $2/9$.

\section{Acknowledgments}

We would like to thank Carnegie Mellon University for supporting the Summer Undergraduate Applied Mathematics Institute. Additionally, we gratefully acknowledge financial support for this research from the following grants: NSF DGE-1041000 (Jessica De Silva), NSF DMS-1606350 (Michael Tait), and NSF DMS-1719841 (Michael Young).

\end{document}